%% file: main.tex
\documentclass[10pt]{article}
\usepackage{preamble}

\title{Astrolabe: A Content-Addressable Hypergraph\\for Semantic Knowledge Management}
\author{
  Xinze Li\thanks{Department of Mathematics, University of Toronto. \texttt{lixinze@math.toronto.edu}}
}
\date{\today}

\begin{document}

\maketitle

\begin{abstract}
Existing knowledge management tools either preserve prose but lose structural relationships, or capture relationships but restrict edge semantics to fixed vocabularies. We introduce Astrolabe, a content-addressable hypergraph for semantic knowledge management. Entries are identified by the SHA-256 hash of their content, carry an ordered reference list of arbitrary width, and store an opaque record string interpreted by plugins. The structure admits two orthogonal decompositions: by width and by depth. We demonstrate the framework with a plugin bridging informal and formal mathematics.
\end{abstract}

{\footnotesize\tableofcontents}
\newpage
\input{sections/introduction}
\input{sections/data-model}
\input{sections/plugins}
\input{sections/conclusion}

\section*{Acknowledgements}
\addcontentsline{toc}{section}{Acknowledgements}

The author thanks Jiaqi Lai and Alejandro Radisic for exploring the \href{https://github.com/Xinze-Li-Moqian/Astrolabe}{early prototype} together, without which the data structure presented here would not have taken shape. The author is grateful to Alex Kontorovich, Leonardo de Moura, Yevgeny Liokumovich, Simone Severini, and Patrick Shafto for their encouragement and support, to Samuel Schlesinger and Zhifei Zhu for helpful discussions, and to the Lean community for valuable feedback.

\addcontentsline{toc}{section}{References}
\bibliographystyle{alpha}
\bibliography{references}

\end{document}

%% file: sections/introduction.tex
\section{Introduction}
\label{sec:introduction}

Knowledge management tools broadly fall into two categories: document-centric systems (wikis, note-taking apps) that store prose but lose structural relationships, and graph databases that capture relationships but require schema design and lack accessible authoring interfaces. A recurring challenge across both categories is representing \textbf{semantic} relationships. In a mathematical knowledge base, for instance, knowing \textbf{that} Theorem~A depends on Definition~B is less informative than knowing \textbf{how}: does A unfold B, rewrite by B, or apply B to a subterm? Tools such as \texttt{leanblueprint}~\cite{Massot2020} have made significant progress by introducing dependency graphs for formalization projects; Astrolabe builds on this foundation by adding semantic content to each dependency edge.

In this paper, we present Astrolabe, a content-addressable data structure designed for semantic knowledge management. In \S\ref{sec:data-model}, we define the core data structure and two orthogonal decompositions: by width (how many references an entry has) and by depth (how deep the reference chain goes). In \S\ref{sec:plugins}, we present LeanNets, the first plugin built on this framework, which bridges informal (\LaTeX) and formal (Lean~4) mathematics by extracting a directed semantic network from the store.
\subsection{Related Work}

Note-taking tools such as Obsidian~\cite{Obsidian}, Roam Research~\cite{RoamResearch}, and Logseq~\cite{Logseq} introduced bidirectional links, turning notes into graphs. These links carry no semantic content: \texttt{[[page name]]} records a connection but not its nature. Knowledge graphs in the RDF/OWL tradition provide typed edges, but the type vocabulary is a fixed enumeration; adding a new relationship type requires schema modification. Content-addressable systems such as Git and IPFS/IPLD~\cite{IPLD} provide immutable, hash-identified objects, but impose fixed object types and do not support higher-dimensional semantic structures. These systems use \textbf{Merkle DAGs}: each object's hash is computed over both its content and the hashes of its children, so modifying any leaf cascades hash changes to every ancestor. A single root hash therefore authenticates an entire subgraph---the security foundation of Git and IPFS. The same mechanism makes reference cycles unrepresentable, since the hash of a cycle cannot be computed; it also means that editing any object invalidates the identity of every ancestor, and that independently edited replicas diverge in identity even when their content agrees. Astrolabe computes $\mathrm{id}(e) = H(\mathrm{rec}(e))$ with the reference list excluded from the hash. An entry's identity therefore depends only on its record string, not on what it references or what references it: reference cycles are permitted, and editing one entry does not change the identity of another. The trade-off is that hash cascading---and the tamper evidence it provides---is absent from the core. For a knowledge base under active editing, this is acceptable; for applications that do require cascading verification, a plugin can encode reference information into the record field, recovering Merkle-style hash dependencies. Semantic propagation of changes is handled explicitly at the application layer (\S\ref{sec:plugins}).

On the theoretical side, Spivak and Kent~\cite{Spivak2012} proposed \textbf{ologs} (ontology logs) as a categorical framework for knowledge representation (Figure~\ref{fig:olog}). Objects are types, morphisms are functional relations, and the category-theoretic structure enforces consistency. However, ologs restrict morphisms to functional relations and cannot carry open-ended content. The framework has seen limited software adoption.

\begin{figure}[ht]
\centering
\begin{tikzcd}[column sep=large, row sep=large]
\text{a bounded sequence} \arrow[r, "\text{has}"] \arrow[d, "\text{is}"'] & \text{a convergent subsequence} \arrow[d, "\text{is}"] \\
\text{a sequence in } \mathbb{R}^n \arrow[r, "\text{lives in}"'] & \text{a subset of } \mathbb{R}^n
\end{tikzcd}
\caption{An olog~\cite{Spivak2012}: objects are types, morphisms are functional relations, and the diagram commutes.}
\label{fig:olog}
\end{figure}
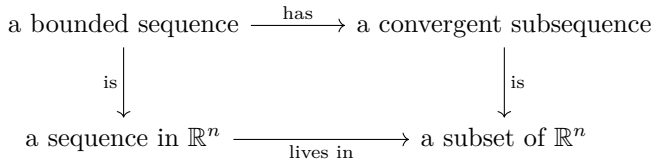

On the engineering side, HyperGraphDB~\cite{HyperGraphDB} is a generalized graph database where every entity is an \textbf{atom} with a target set (a list of references to other atoms). Atoms with an empty target set are pure nodes; atoms with a non-empty target set are hyperedges. Since hyperedges are themselves atoms, they can be referenced by other hyperedges, yielding higher-order relationships (Figure~\ref{fig:hgdb}).

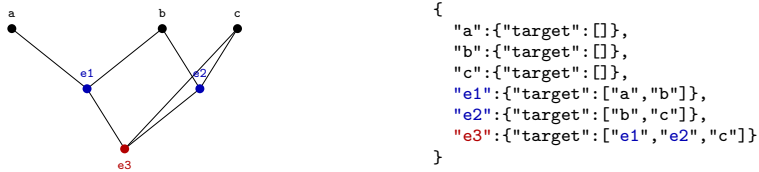
\begin{figure}[ht]
\centering
\begin{minipage}[t]{0.48\textwidth}
\vspace{0pt}
\centering
\begin{tikzpicture}[v/.style={circle, fill, inner sep=0.5pt}]
  \tikzset{s0/.style={circle, fill=black, inner sep=1.2pt}}
  \tikzset{s1/.style={circle, fill=blue!70!black, inner sep=1.2pt}}
  \tikzset{s2/.style={circle, fill=red!70!black, inner sep=1.2pt}}
  \draw[very thin] (1.0,1.0) -- (0.0,1.8);
  \draw[very thin] (1.0,1.0) -- (2.0,1.8);
  \draw[very thin] (2.5,1.0) -- (2.0,1.8);
  \draw[very thin] (2.5,1.0) -- (3.0,1.8);
  \draw[very thin] (1.5,0.2) -- (1.0,1.0);
  \draw[very thin] (1.5,0.2) -- (2.5,1.0);
  \draw[very thin] (1.5,0.2) -- (3.0,1.8);
  \node[s0] (a) at (0.0,1.8) {};
  \node[font=\tiny, above=1pt, text=black] at (a) {\texttt{a}};
  \node[s0] (b) at (2.0,1.8) {};
  \node[font=\tiny, above=1pt, text=black] at (b) {\texttt{b}};
  \node[s0] (c) at (3.0,1.8) {};
  \node[font=\tiny, above=1pt, text=black] at (c) {\texttt{c}};
  \node[s1] (e1) at (1.0,1.0) {};
  \node[font=\tiny, above=1pt, text=blue!70!black] at (e1) {\texttt{e1}};
  \node[s1] (e2) at (2.5,1.0) {};
  \node[font=\tiny, above=1pt, text=blue!70!black] at (e2) {\texttt{e2}};
  \node[s2] (e3) at (1.5,0.2) {};
  \node[font=\tiny, below=1pt, text=red!70!black] at (e3) {\texttt{e3}};
\end{tikzpicture}
\end{minipage}%
\hspace{4pt}%
\begin{minipage}[t]{0.48\textwidth}
\vspace{0pt}
{\scriptsize\ttfamily
\begin{tabular}{@{}l@{}}
\{\\
\ \ \textcolor{black}{"a"}:\{"target":[]\},\\
\ \ \textcolor{black}{"b"}:\{"target":[]\},\\
\ \ \textcolor{black}{"c"}:\{"target":[]\},\\
\ \ \textcolor{blue!70!black}{"e1"}:\{"target":["\textcolor{black}{a}","\textcolor{black}{b}"]\},\\
\ \ \textcolor{blue!70!black}{"e2"}:\{"target":["\textcolor{black}{b}","\textcolor{black}{c}"]\},\\
\ \ \textcolor{red!70!black}{"e3"}:\{"target":["\textcolor{blue!70!black}{e1}","\textcolor{blue!70!black}{e2}","\textcolor{black}{c}"]\}\\
\}
\end{tabular}
}
\end{minipage}
\caption{HyperGraphDB data model: every entity is an atom with a target set. \textcolor{black}{Black} = atoms (target = []), \textcolor{blue!70!black}{blue} = hyperedges, \textcolor{red!70!black}{red} = higher-order hyperedge ($e_3$ references $e_1$ and $e_2$).}
\label{fig:hgdb}
\end{figure}

\subsection{Overview}

Astrolabe adopts the same core abstraction as HyperGraphDB (every entry has an ordered reference list pointing to other entries) and adds content-addressable identity: entries are identified by the SHA-256 hash of their content (as in Git/IPFS~\cite{IPLD}), providing deduplication, per-entry tamper evidence, and decentralized identity. Because identity depends only on content, changes do not propagate through hashes; Astrolabe therefore provides a semantic propagation mechanism that traverses the skeleton graph to identify all entries semantically affected by a change.

The data structure is \textbf{maximally relaxed}. The record field is an opaque string; all domain-specific conventions are delegated to plugins, keeping the core format domain-agnostic. Astrolabe is open-source and available as a Tauri desktop application.\footnote{\url{https://github.com/MathNetwork/Astrolabe}}

%% file: sections/data-model.tex
\section{Data Model}
\label{sec:data-model}

We fix a hash function $H : \Sigma^* \to \Sigma^*$ throughout. The core data structure of Astrolabe is a content-addressable store of \textbf{nerves}, where each nerve is a triple of identity, ref, and record. Nerves can have arbitrary-length references, forming higher-dimensional semantic structures. The store admits two orthogonal decompositions: by \textbf{width} (how many references) and by \textbf{depth} (the depth of the reference chain). We describe each in turn.

\subsection{Content-addressable Hypergraph}

\begin{definition}[AstroNerve]\label{def:nerve}
An \textbf{AstroNerve} is a triple $(\mathrm{id},\, \mathrm{ref},\, \mathrm{rec})$ where $\mathrm{id} \in \Sigma^*$ is the identity, $\mathrm{ref} = (r_1, \ldots, r_k)$ is an ordered list of references ($r_i \in \Sigma^*$), and $\mathrm{rec} \in \Sigma^*$ is the record string.  The \textbf{width} of an AstroNerve is $\mathrm{w}(e) = |\mathrm{ref}(e)| - 1$.
\end{definition}

\begin{definition}[AstroNet]\label{def:ca-hypergraph}\label{def:well-formed}
Let $H : \Sigma^* \to \Sigma^*$ be a hash function.  An \textbf{AstroNet} (or \textbf{well-formed content-addressable hypergraph}) is a finite set $A$ of nerves together with $H$, satisfying:
\begin{enumerate}
  \setcounter{enumi}{-1}
  \item \textbf{(Content-addressing)} For every $e \in A$, $\mathrm{id}(e) = H(\mathrm{rec}(e))$.
  \item \textbf{(Self-reference)} Every nerve $e$ with $\mathrm{w}(e) = 0$ satisfies $\mathrm{ref}(e) = (\mathrm{id}(e))$.
  \item \textbf{(Injectivity)} The map $e \mapsto \mathrm{id}(e)$ is injective on $A$.
  \item \textbf{(Closure)} For every $e \in A$ and $r_i \in \mathrm{ref}(e)$, there exists $e' \in A$ with $\mathrm{id}(e') = r_i$.
  \item \textbf{(No duplicates)} For every nerve $e$ with $\mathrm{w}(e) > 0$, the elements of $\mathrm{ref}(e)$ are pairwise distinct.
  \item \textbf{(No self-reference)} For every nerve $e$ with $\mathrm{w}(e) > 0$, $\mathrm{id}(e) \notin \mathrm{ref}(e)$.
\end{enumerate}
\end{definition}

A nerve $e \in A$ with $\mathrm{w}(e) = 0$ is called an \textbf{atom}; by the self-reference axiom, $\mathrm{ref}(e) = (\mathrm{id}(e))$.  Non-emptiness of $\mathrm{ref}(e)$ follows: atoms have length~1, and nerves with $\mathrm{w}(e) > 0$ have $|\mathrm{ref}(e)| \geq 2$ by the definition of width.  All networks in the sequel are AstroNets.

The format is general-purpose and can serve any knowledge management scenario by defining appropriate record conventions. In the current implementation, the store is serialized as a single JSON file (\texttt{astrolabe.json}):

\medskip
{\small\ttfamily
\begin{tabular}{@{}l@{}}
\{\\
\ \ \textcolor{gray}{"<12-char-hash>"}:\{\textcolor{blue!70!black}{"ref"}:[\textcolor{gray}{"<hash>"}, ...], \textcolor{red!70!black}{"record"}:\textcolor{gray}{"<string>"}\}\\
\}
\end{tabular}
}
\medskip

\noindent Concretely, each nerve has three parts:
\begin{itemize}
  \item \textbf{ref} --- an ordered list of hashes, any length. $|\texttt{ref}|$ defines the \textbf{width}:
    \begin{itemize}
      \item width 0: \texttt{ref = [self\_hash]} --- an atom (base unit)
      \item width 1: \texttt{ref = [A, B]} --- a binary relation
      \item width $k$: \texttt{ref = [h$_0$, h$_1$, ..., h$_k$]} --- a higher-dimensional semantic relation
    \end{itemize}
  \item \textbf{record} --- a plain string. The core layer does not interpret it; plugins define conventions for structured content (JSON with \texttt{sort}, \texttt{source}, \texttt{title}, \texttt{notes}, etc.).
  \item \textbf{hash} --- $\texttt{SHA256}(\mathit{record})\texttt{[:12 hex]}$, content-addressable.
\end{itemize}

\subsection{Width Decomposition}

Figure~\ref{fig:astrolabe-network} shows a 15-nerve AstroNet and its reference network. The nerves span multiple widths: 4~atoms (\texttt{a1}--\texttt{a4}), 4~width-1 edges (\texttt{e1}--\texttt{e4}), and higher-width nerves that freely mix atoms and edges in their \texttt{ref} lists. The nerves \texttt{c1}, \texttt{c2}, \texttt{c3} form a reference cycle. The nerve \texttt{m2} references \texttt{m1} (which itself references \texttt{f1}, a width-2 nerve), demonstrating that references are unconstrained: any nerve can reference any other nerve, regardless of width.

\begin{figure}[ht]
\centering
\begin{minipage}[t]{0.48\textwidth}
\vspace{0pt}
\centering
\scalebox{1.5}{\input{figures/small_network_width.tex}}
\end{minipage}%
\hspace{4pt}%
\begin{minipage}[t]{0.48\textwidth}
\vspace{0pt}
{\scriptsize\ttfamily
\begin{tabular}{@{}l@{}}
\{\\
\ \ \textcolor{black}{"a1"}:\{"ref":["a1"]\},\\
\ \ \textcolor{black}{"a2"}:\{"ref":["a2"]\},\\
\ \ \textcolor{black}{"a3"}:\{"ref":["a3"]\},\\
\ \ \textcolor{black}{"a4"}:\{"ref":["a4"]\},\\
\ \ \textcolor{blue!70!black}{"e1"}:\{"ref":["a1","a2"]\},\\
\ \ \textcolor{blue!70!black}{"e2"}:\{"ref":["a2","a3"]\},\\
\ \ \textcolor{blue!70!black}{"e3"}:\{"ref":["a3","a4"]\},\\
\ \ \textcolor{blue!70!black}{"e4"}:\{"ref":["a4","a1"]\},\\
\ \ \textcolor{red!70!black}{"f1"}:\{"ref":["a1","e1","e2"]\},\\
\ \ \textcolor{blue!70!black}{"f2"}:\{"ref":["e3","e4"]\},\\
\ \ \textcolor{blue!70!black}{"m1"}:\{"ref":["f1","e3"]\},\\
\ \ \textcolor{red!70!black}{"m2"}:\{"ref":["m1","f2","a2"]\},\\
\ \ \textcolor{blue!70!black}{"c1"}:\{"ref":["c2","a1"]\},\\
\ \ \textcolor{blue!70!black}{"c2"}:\{"ref":["c3","a2"]\},\\
\ \ \textcolor{blue!70!black}{"c3"}:\{"ref":["c1","a3"]\}\\
\}
\end{tabular}
}
\end{minipage}
\caption{Reference network (left) and AstroNet data (right), colored by width: \textcolor{black}{black} = width~0 (atoms), \textcolor{blue!70!black}{blue} = width~1, \textcolor{red!70!black}{red} = width~2. Nerves \texttt{f1} and \texttt{m2} mix atoms and edges in their \texttt{ref}.}
\label{fig:astrolabe-network}
\end{figure}
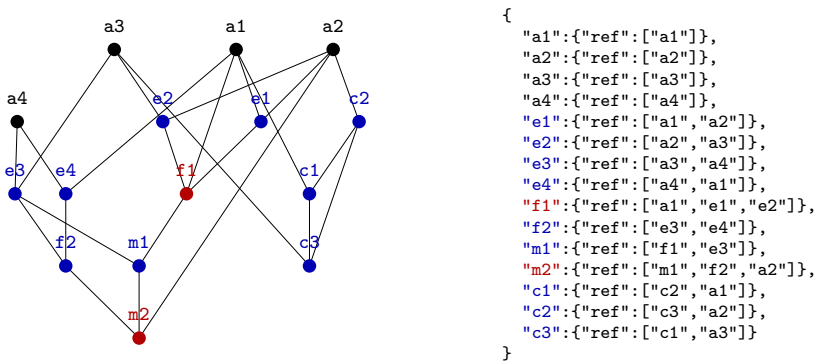

\subsection{Depth Decomposition}

Width counts the number of references but does not distinguish whether those references point to atoms or to deeply nested nerves.  The depth filtration captures this layering.

\begin{definition}[Depth filtration]\label{def:stage}
Given an AstroNet $A$, define $A^{(0)} \subseteq A^{(1)} \subseteq \cdots \subseteq A$ inductively:
\begin{itemize}
  \item $A^{(0)} = \{ e \in A \mid \mathrm{w}(e) = 0 \}$ \quad (atoms),
  \item $A^{(m+1)} = \{ e \in A \mid \forall\, r \in \mathrm{ref}(e),\; r \in A^{(m)} \}$.
\end{itemize}
The \textbf{depth} of $e$ is $\mathrm{depth}(e) = \min\{m \mid e \in A^{(m)}\}$. Nerves involved in reference cycles have no finite depth and are assigned $\mathrm{depth}(e) = -1$.
\end{definition}

\noindent Figure~\ref{fig:depth-coloring} shows the same network colored by depth instead of width. The key contrast: \textcolor{blue!70!black}{\texttt{e1}} and \textcolor{red!70!black}{\texttt{f2}} both have width~1, but \textcolor{blue!70!black}{\texttt{e1}} is depth~1 (refs are atoms) while \textcolor{red!70!black}{\texttt{f2}} is depth~2 (refs include depth-1 nerves).

\begin{figure}[ht]
\centering
\begin{minipage}[t]{0.48\textwidth}
\vspace{0pt}
\centering
\scalebox{1.5}{\input{figures/small_network_depth.tex}}
\end{minipage}%
\hspace{4pt}%
\begin{minipage}[t]{0.48\textwidth}
\vspace{0pt}
{\scriptsize\ttfamily
\begin{tabular}{@{}l@{}}
\{\\
\ \ \textcolor{black}{"a1"}:\{"ref":["a1"]\},\\
\ \ \textcolor{black}{"a2"}:\{"ref":["a2"]\},\\
\ \ \textcolor{black}{"a3"}:\{"ref":["a3"]\},\\
\ \ \textcolor{black}{"a4"}:\{"ref":["a4"]\},\\
\ \ \textcolor{blue!70!black}{"e1"}:\{"ref":["a1","a2"]\},\\
\ \ \textcolor{blue!70!black}{"e2"}:\{"ref":["a2","a3"]\},\\
\ \ \textcolor{blue!70!black}{"e3"}:\{"ref":["a3","a4"]\},\\
\ \ \textcolor{blue!70!black}{"e4"}:\{"ref":["a4","a1"]\},\\
\ \ \textcolor{red!70!black}{"f1"}:\{"ref":["a1","e1","e2"]\},\\
\ \ \textcolor{red!70!black}{"f2"}:\{"ref":["e3","e4"]\},\\
\ \ \textcolor{violet!70!black}{"m1"}:\{"ref":["f1","e3"]\},\\
\ \ \textcolor{green!50!black}{"m2"}:\{"ref":["m1","f2","a2"]\},\\
\ \ \textcolor{gray}{"c1"}:\{"ref":["c2","a1"]\},\\
\ \ \textcolor{gray}{"c2"}:\{"ref":["c3","a2"]\},\\
\ \ \textcolor{gray}{"c3"}:\{"ref":["c1","a3"]\}\\
\}
\end{tabular}
}
\end{minipage}
\caption{Depth coloring of the same network: \textcolor{black}{black} = depth~0, \textcolor{blue!70!black}{blue} = depth~1, \textcolor{red!70!black}{red} = depth~2, \textcolor{violet!70!black}{purple} = depth~3, \textcolor{green!50!black}{green} = depth~4, \textcolor{gray}{gray} = cycle. Note: \texttt{e1} and \texttt{f2} both have width~1, but \texttt{e1} is depth~1 while \texttt{f2} is depth~2.}
\label{fig:depth-coloring}
\end{figure}

\begin{proposition}\label{prop:stage-stabilize}
For any finite AstroNet $A$, the depth filtration stabilizes: there exists $N$ such that $A^{(N)} = A^{(N+1)} = \cdots$. Let $U = A \setminus \bigcup_{m \geq 0} A^{(m)}$ denote the set of nerves with no finite depth. Then: (a)~every nerve on a reference cycle belongs to $U$; (b)~every nerve in $U$ reaches, via a finite chain of references, a nerve on a reference cycle. In particular, $U = \emptyset$ if and only if $A$ is acyclic.
\end{proposition}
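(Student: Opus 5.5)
The plan is to work directly from Definition~\ref{def:stage}. Throughout, the reference $r$ is identified with the unique nerve $e'$ having $\mathrm{id}(e')=r$; this nerve exists by (Closure) and is unique by (Injectivity). A \emph{reference cycle} will mean a closed chain $e_1\to e_2\to\cdots\to e_k\to e_1$ with $\mathrm{id}(e_{i+1})\in\mathrm{ref}(e_i)$, excluding the trivial self-loops of atoms required by (Self-reference). This exclusion is forced, since otherwise every atom would lie on a cycle. Note that a cycle can never pass through an atom: the only reference of an atom is itself, so the cycle would have to be that self-loop. Hence every nerve on a cycle has width $>0$. By (No self-reference), every cycle then has length $k\ge 2$.

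\textbf{Step 1 (monotonicity).} I first check that the filtration is nested, $A^{(m)}\subseteq A^{(m+1)}$, since the definition only asserts this. I will argue by induction on $m$. For the base case, an atom's sole reference is itself, which lies in $A^{(0)}$, so $A^{(0)}\subseteq A^{(1)}$. For the inductive step, if $A^{(m-1)}\subseteq A^{(m)}$, then any $e$ whose references all lie in $A^{(m-1)}$ has all its references in $A^{(m)}$, so $A^{(m)}\subseteq A^{(m+1)}$.

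\textbf{Step 2 (stabilization).} The sets $A^{(m)}$ form an increasing chain of subsets of the finite set $A$. Hence there is some $N\le|A|$ with $A^{(N)}=A^{(N+1)}$. Since $A^{(m+1)}$ is determined by $A^{(m)}$ alone, equality then propagates to all later stages, giving $A^{(N)}=A^{(N+1)}=\cdots$.

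\textbf{Step 3 (part (a)).} Take a cycle $e_1,\dots,e_k$ and suppose some $e_i$ has finite depth. Choose $i$ so that $m=\mathrm{depth}(e_i)$ is minimal along the cycle. Since $e_i$ is not an atom, $m\ge 1$. Its reference $e_{i+1}$ must then lie in $A^{(m-1)}$, which contradicts the minimality of $m$. So every nerve on a cycle lies in $U$.

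\textbf{Step 4 (part (b)).} Let $e\in U$; note that $e$ is not an atom. I claim $e$ has at least one reference in $U$. If instead every reference $r$ of $e$ had finite depth $d_r$, then by monotonicity all of them would lie in $A^{(D)}$ with $D=\max_r d_r$. That would put $e\in A^{(D+1)}$, a contradiction. Iterating the claim produces an infinite reference chain $e=u_0\to u_1\to\cdots$ with every $u_j\in U$, and in particular every $u_j$ a non-atom. Because $A$ is finite, some nerve repeats, say $u_i=u_j$ with $i<j$. The segment $u_i,\dots,u_j$ is then a reference cycle consisting of non-atoms, and $e$ reaches it after $i$ steps.

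\textbf{Step 5 (the final equivalence).} If $A$ has a cycle, then $U\neq\emptyset$ by (a). If $U\neq\emptyset$, then (b) produces a cycle. Hence $U=\emptyset$ if and only if $A$ is acyclic.

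The main obstacle is not technical but definitional. One has to fix what ``reference cycle'' means so that the mandatory atom self-loops are excluded, and one has to verify the monotonicity of Step~1, on which both stabilization and the bound $D+1$ in Step~4 depend.
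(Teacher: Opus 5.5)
Your proof is correct and follows the same route as the paper's proof: a finite increasing chain gives stabilization, depth strictly decreases along references so no nerve on a cycle can have finite depth, and for (b) every nerve of $U$ has a reference in $U$, so iterating and applying pigeonhole produces a cycle. Your additions (verifying monotonicity, excluding the mandatory atom self-loops from the notion of ``reference cycle'', and taking the minimal finite depth in (a) instead of assuming every depth on the cycle is finite) make explicit what the paper's terse proof leaves implicit.
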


\begin{proof}
Since $A$ is finite and $A^{(0)} \subseteq A^{(1)} \subseteq \cdots \subseteq A$, the chain stabilizes.  For~(a), if $e$ lies on a cycle $e_1, \ldots, e_n$, finite depths would give a strictly decreasing cycle $d_1 > d_2 > \cdots > d_n > d_1$, a contradiction.  For~(b), each $e \in U$ has a reference in $U$ (by Axiom~5, distinct from $e$ itself); iterating and applying pigeonhole yields a cycle.
\end{proof}

Since identity is computed from $\mathrm{rec}(e)$ alone, reference cycles do not affect hash computation. Cyclic nerves are assigned $\mathrm{depth}(e) = -1$ for structural analysis purposes.

%% file: figures/small_network_width.tex
\begin{tikzpicture}[v/.style={circle, fill, inner sep=0.5pt}]
  \tikzset{s0/.style={circle, fill=black, inner sep=1.2pt}}
  \tikzset{s1/.style={circle, fill=blue!70!black, inner sep=1.2pt}}
  \tikzset{s2/.style={circle, fill=red!70!black, inner sep=1.2pt}}
  \draw[very thin] (2.43,2.04) -- (2.21,2.68);
  \draw[very thin] (2.43,2.04) -- (3.07,2.68);
  \draw[very thin] (1.56,2.04) -- (3.07,2.68);
  \draw[very thin] (1.56,2.04) -- (1.13,2.68);
  \draw[very thin] (0.25,1.40) -- (1.13,2.68);
  \draw[very thin] (0.25,1.40) -- (0.27,2.04);
  \draw[very thin] (0.70,1.40) -- (0.27,2.04);
  \draw[very thin] (0.70,1.40) -- (2.21,2.68);
  \draw[very thin] (1.77,1.40) -- (2.21,2.68);
  \draw[very thin] (1.77,1.40) -- (2.43,2.04);
  \draw[very thin] (1.77,1.40) -- (1.56,2.04);
  \draw[very thin] (0.70,0.76) -- (0.25,1.40);
  \draw[very thin] (0.70,0.76) -- (0.70,1.40);
  \draw[very thin] (1.35,0.76) -- (1.77,1.40);
  \draw[very thin] (1.35,0.76) -- (0.25,1.40);
  \draw[very thin] (1.35,0.12) -- (1.35,0.76);
  \draw[very thin] (1.35,0.12) -- (0.70,0.76);
  \draw[very thin] (1.35,0.12) -- (3.07,2.68);
  \draw[very thin] (2.86,1.40) -- (3.30,2.04);
  \draw[very thin] (2.86,1.40) -- (2.21,2.68);
  \draw[very thin] (3.30,2.04) -- (2.86,0.76);
  \draw[very thin] (3.30,2.04) -- (3.07,2.68);
  \draw[very thin] (2.86,0.76) -- (2.86,1.40);
  \draw[very thin] (2.86,0.76) -- (1.13,2.68);
  \node[s0] (a1) at (2.21,2.68) {};
  \node[font=\tiny, above=1pt, text=black] at (a1) {\texttt{a1}};
  \node[s0] (a2) at (3.07,2.68) {};
  \node[font=\tiny, above=1pt, text=black] at (a2) {\texttt{a2}};
  \node[s0] (a3) at (1.13,2.68) {};
  \node[font=\tiny, above=1pt, text=black] at (a3) {\texttt{a3}};
  \node[s0] (a4) at (0.27,2.04) {};
  \node[font=\tiny, above=1pt, text=black] at (a4) {\texttt{a4}};
  \node[s1] (e1) at (2.43,2.04) {};
  \node[font=\tiny, above=1pt, text=blue!70!black] at (e1) {\texttt{e1}};
  \node[s1] (e2) at (1.56,2.04) {};
  \node[font=\tiny, above=1pt, text=blue!70!black] at (e2) {\texttt{e2}};
  \node[s1] (e3) at (0.25,1.40) {};
  \node[font=\tiny, above=1pt, text=blue!70!black] at (e3) {\texttt{e3}};
  \node[s1] (e4) at (0.70,1.40) {};
  \node[font=\tiny, above=1pt, text=blue!70!black] at (e4) {\texttt{e4}};
  \node[s2] (f1) at (1.77,1.40) {};
  \node[font=\tiny, above=1pt, text=red!70!black] at (f1) {\texttt{f1}};
  \node[s1] (f2) at (0.70,0.76) {};
  \node[font=\tiny, above=1pt, text=blue!70!black] at (f2) {\texttt{f2}};
  \node[s1] (m1) at (1.35,0.76) {};
  \node[font=\tiny, above=1pt, text=blue!70!black] at (m1) {\texttt{m1}};
  \node[s2] (m2) at (1.35,0.12) {};
  \node[font=\tiny, above=1pt, text=red!70!black] at (m2) {\texttt{m2}};
  \node[s1] (c1) at (2.86,1.40) {};
  \node[font=\tiny, above=1pt, text=blue!70!black] at (c1) {\texttt{c1}};
  \node[s1] (c2) at (3.30,2.04) {};
  \node[font=\tiny, above=1pt, text=blue!70!black] at (c2) {\texttt{c2}};
  \node[s1] (c3) at (2.86,0.76) {};
  \node[font=\tiny, above=1pt, text=blue!70!black] at (c3) {\texttt{c3}};
\end{tikzpicture}

%% file: figures/small_network_depth.tex
\begin{tikzpicture}[v/.style={circle, fill, inner sep=0.5pt}]
  \tikzset{sm1/.style={circle, fill=gray, inner sep=1.2pt}}
  \tikzset{s0/.style={circle, fill=black, inner sep=1.2pt}}
  \tikzset{s1/.style={circle, fill=blue!70!black, inner sep=1.2pt}}
  \tikzset{s2/.style={circle, fill=red!70!black, inner sep=1.2pt}}
  \tikzset{s3/.style={circle, fill=violet!70!black, inner sep=1.2pt}}
  \tikzset{s4/.style={circle, fill=green!50!black, inner sep=1.2pt}}
  \draw[very thin] (2.43,2.04) -- (2.21,2.68);
  \draw[very thin] (2.43,2.04) -- (3.07,2.68);
  \draw[very thin] (1.56,2.04) -- (3.07,2.68);
  \draw[very thin] (1.56,2.04) -- (1.13,2.68);
  \draw[very thin] (0.25,1.40) -- (1.13,2.68);
  \draw[very thin] (0.25,1.40) -- (0.27,2.04);
  \draw[very thin] (0.70,1.40) -- (0.27,2.04);
  \draw[very thin] (0.70,1.40) -- (2.21,2.68);
  \draw[very thin] (1.77,1.40) -- (2.21,2.68);
  \draw[very thin] (1.77,1.40) -- (2.43,2.04);
  \draw[very thin] (1.77,1.40) -- (1.56,2.04);
  \draw[very thin] (0.70,0.76) -- (0.25,1.40);
  \draw[very thin] (0.70,0.76) -- (0.70,1.40);
  \draw[very thin] (1.35,0.76) -- (1.77,1.40);
  \draw[very thin] (1.35,0.76) -- (0.25,1.40);
  \draw[very thin] (1.35,0.12) -- (1.35,0.76);
  \draw[very thin] (1.35,0.12) -- (0.70,0.76);
  \draw[very thin] (1.35,0.12) -- (3.07,2.68);
  \draw[very thin] (2.86,1.40) -- (3.30,2.04);
  \draw[very thin] (2.86,1.40) -- (2.21,2.68);
  \draw[very thin] (3.30,2.04) -- (2.86,0.76);
  \draw[very thin] (3.30,2.04) -- (3.07,2.68);
  \draw[very thin] (2.86,0.76) -- (2.86,1.40);
  \draw[very thin] (2.86,0.76) -- (1.13,2.68);
  \node[s0] (a1) at (2.21,2.68) {};
  \node[font=\tiny, above=1pt, text=black] at (a1) {\texttt{a1}};
  \node[s0] (a2) at (3.07,2.68) {};
  \node[font=\tiny, above=1pt, text=black] at (a2) {\texttt{a2}};
  \node[s0] (a3) at (1.13,2.68) {};
  \node[font=\tiny, above=1pt, text=black] at (a3) {\texttt{a3}};
  \node[s0] (a4) at (0.27,2.04) {};
  \node[font=\tiny, above=1pt, text=black] at (a4) {\texttt{a4}};
  \node[s1] (e1) at (2.43,2.04) {};
  \node[font=\tiny, above=1pt, text=blue!70!black] at (e1) {\texttt{e1}};
  \node[s1] (e2) at (1.56,2.04) {};
  \node[font=\tiny, above=1pt, text=blue!70!black] at (e2) {\texttt{e2}};
  \node[s1] (e3) at (0.25,1.40) {};
  \node[font=\tiny, above=1pt, text=blue!70!black] at (e3) {\texttt{e3}};
  \node[s1] (e4) at (0.70,1.40) {};
  \node[font=\tiny, above=1pt, text=blue!70!black] at (e4) {\texttt{e4}};
  \node[s2] (f1) at (1.77,1.40) {};
  \node[font=\tiny, above=1pt, text=red!70!black] at (f1) {\texttt{f1}};
  \node[s2] (f2) at (0.70,0.76) {};
  \node[font=\tiny, above=1pt, text=red!70!black] at (f2) {\texttt{f2}};
  \node[s3] (m1) at (1.35,0.76) {};
  \node[font=\tiny, above=1pt, text=violet!70!black] at (m1) {\texttt{m1}};
  \node[s4] (m2) at (1.35,0.12) {};
  \node[font=\tiny, above=1pt, text=green!50!black] at (m2) {\texttt{m2}};
  \node[sm1] (c1) at (2.86,1.40) {};
  \node[font=\tiny, above=1pt, text=gray] at (c1) {\texttt{c1}};
  \node[sm1] (c2) at (3.30,2.04) {};
  \node[font=\tiny, above=1pt, text=gray] at (c2) {\texttt{c2}};
  \node[sm1] (c3) at (2.86,0.76) {};
  \node[font=\tiny, above=1pt, text=gray] at (c3) {\texttt{c3}};
\end{tikzpicture}

%% file: sections/plugins.tex
\section{The LeanNets Plugin}
\label{sec:plugins}

The core store is domain-agnostic; all domain logic lives in plugins. In this section, we present \textbf{LeanNets}, the first plugin, which turns Astrolabe into a bridge between informal and formal mathematics.

\subsection{Motivation}

Mathematical knowledge is rich in semantic relationships. A theorem may \textbf{unfold} a definition, \textbf{rewrite by} a lemma, \textbf{apply} a proposition to a subterm, or \textbf{specialize} a general result to a particular case. These distinctions matter: knowing \textbf{that} Theorem~A depends on Definition~B is less informative than knowing \textbf{how}. Does A unfold B, pattern-match on B's constructors, or merely pass B to another lemma? The \texttt{leanblueprint} tool~\cite{Massot2020} pioneered the use of visual dependency graphs for Lean formalization projects and has become essential infrastructure for large-scale efforts. LeanArchitect~\cite{Zhu2026} further automates blueprint generation. On the compiler side, Lean produces fine-grained declaration-level dependency graphs~\cite{LiPengSeverini2026}. These tools represent dependencies as directed edges labeled ``uses.'' LeanNets builds on this foundation by adding semantic content to each edge, recording \textbf{how} a dependency is used.

\begin{figure}[ht]
\centering
\begin{tikzpicture}[
  bp/.style={draw, rounded corners=2pt, fill=green!10, font=\scriptsize, inner sep=3pt, minimum width=1.4cm, minimum height=0.5cm},
  decl/.style={circle, fill, inner sep=1.5pt},
  arr/.style={->, >=Stealth, thin},
  darr/.style={->, >=Stealth, thin, red!60!black}
]
\node[font=\small\bfseries] at (1.5, 2.8) {Blueprint};
\node[bp] (la) at (1.5, 2) {\texttt{length\_append}};
\node[bp] (lm) at (0, 0.5) {\texttt{length\_map}};
\node[bp] (mm) at (3, 0.5) {\texttt{map\_map}};
\node[bp] (len) at (1.5, -0.8) {\texttt{List.length}};
\draw[arr] (la) -- node[left, font=\tiny] {uses} (lm);
\draw[arr] (la) -- node[right, font=\tiny] {uses} (len);
\draw[arr] (lm) -- node[left, font=\tiny] {uses} (len);
\draw[arr] (mm) -- node[right, font=\tiny] {uses} (len);

\node[font=\small\bfseries] at (7.5, 2.8) {Declaration graph};
\node[decl, label={[font=\scriptsize]above:\texttt{length\_append}}] (a) at (7.5, 2) {};
\node[decl, label={[font=\scriptsize]left:\texttt{length\_map}}] (b) at (5.8, 0.8) {};
\node[decl, label={[font=\scriptsize]right:\texttt{map\_map}}] (c) at (9.2, 0.8) {};
\node[decl, label={[font=\scriptsize]below:\texttt{List.length}}] (d) at (7.5, -0.3) {};
\node[decl, label={[font=\scriptsize]left:\texttt{List.rec}}] (rec) at (5.5, -0.5) {};
\node[decl, label={[font=\scriptsize]right:\texttt{Nat.add}}] (add) at (9.5, -0.5) {};
\node[decl, label={[font=\scriptsize]below:\texttt{List.map}}] (map) at (7.5, -1.3) {};
\draw[darr] (a) -- (b);
\draw[darr] (a) -- (d);
\draw[darr] (a) -- (add);
\draw[darr] (b) -- (d);
\draw[darr] (b) -- (map);
\draw[darr] (c) -- (map);
\draw[darr] (c) -- (rec);
\draw[darr] (d) -- (rec);
\draw[darr, dashed, gray] (a) to[bend right=15] (rec);
\draw[darr, dashed, gray] (a) to[bend left=15] (map);
\end{tikzpicture}
\caption{Left: a \texttt{leanblueprint} dependency graph. Each edge is labeled ``uses'' with no further content. Right: a declaration-level dependency graph extracted from Lean~4 compilation artifacts. Both representations lose the \textbf{semantic content} of each edge.}
\label{fig:blueprint-vs-decl}
\end{figure}
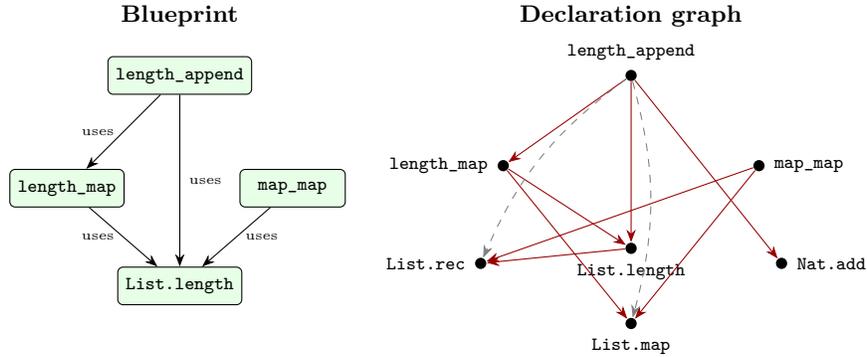

This gap has practical consequences for autoformalization. Large formalization projects are typically organized as blueprint dependency graphs, and autoformalization agents decompose their work along this structure. Harmonic's Aristotle~\cite{Aristotle2025} uses blueprint-structured decomposition to break problems into lemmas and conjectures; Axiom~\cite{Axiom2025} treats autoformalization as hierarchical planning with dependency retrieval; Math Inc.'s Gauss~\cite{Gauss2026} completed the Strong Prime Number Theorem formalization by working through a \texttt{leanblueprint} dependency graph node by node. In all three cases, the dependency annotations available to the agent are limited to ``uses,'' with no distinction between proof strategies.

Astrolabe's data model offers a finer granularity: every edge is an entry with a full \texttt{record}, so the nature of each dependency can be made explicit (unfolding, rewriting, specialization, application).
\subsection{Network Extraction}
The LeanNets plugin restricts attention to entries in $F_1$ with $\mathrm{w} \leq 1$, that is, atoms and width-1 entries whose refs are all atoms. This two-dimensional constraint (depth $\leq 1$, width $\leq 1$) yields a directed graph: atoms become \textbf{nodes}, and width-1 entries become \textbf{directed edges} ($\texttt{ref[0]} \to \texttt{ref[1]}$), with edge semantics derived from the record's \texttt{sort} field. Figure~\ref{fig:entry-view} shows the entry view; Figure~\ref{fig:network-view} shows the extracted network.

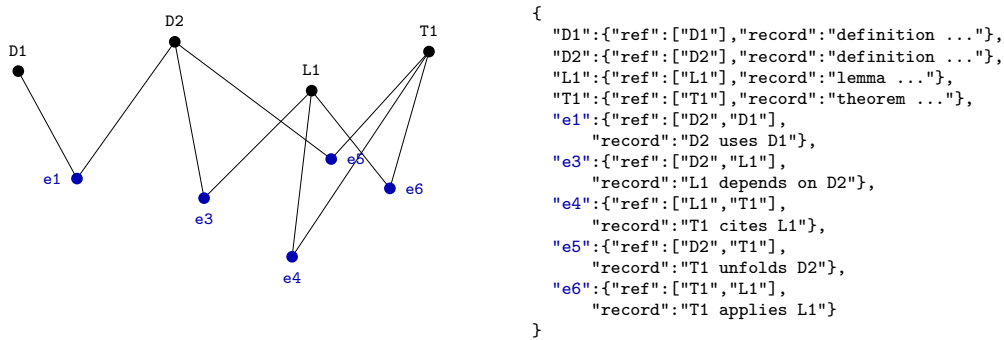
\begin{figure}[ht]
\centering
\begin{minipage}[t]{0.48\textwidth}
\vspace{0pt}
\centering
\scalebox{1.3}{%
\begin{tikzpicture}[v/.style={circle, fill, inner sep=0.5pt}]
  \tikzset{s0/.style={circle, fill=black, inner sep=1.2pt}}
  \tikzset{s1/.style={circle, fill=blue!70!black, inner sep=1.2pt}}
  \draw[very thin] (0.6,1.3) -- (0.0,2.4);
  \draw[very thin] (0.6,1.3) -- (1.6,2.7);
  \draw[very thin] (1.9,1.1) -- (1.6,2.7);
  \draw[very thin] (1.9,1.1) -- (3.0,2.2);
  \draw[very thin] (3.2,1.5) -- (1.6,2.7);
  \draw[very thin] (3.2,1.5) -- (4.2,2.6);
  \draw[very thin] (2.8,0.5) -- (3.0,2.2);
  \draw[very thin] (2.8,0.5) -- (4.2,2.6);
  \draw[very thin] (3.8,1.2) -- (4.2,2.6);
  \draw[very thin] (3.8,1.2) -- (3.0,2.2);
  \node[s0] (D1) at (0.0,2.4) {};
  \node[font=\tiny, above=1pt] at (D1) {\texttt{D1}};
  \node[s0] (D2) at (1.6,2.7) {};
  \node[font=\tiny, above=1pt] at (D2) {\texttt{D2}};
  \node[s0] (L1) at (3.0,2.2) {};
  \node[font=\tiny, above=1pt] at (L1) {\texttt{L1}};
  \node[s0] (T1) at (4.2,2.6) {};
  \node[font=\tiny, above=1pt] at (T1) {\texttt{T1}};
  \node[s1] (e1) at (0.6,1.3) {};
  \node[font=\tiny, left=1pt, text=blue!70!black] at (e1) {\texttt{e1}};
  \node[s1] (e3) at (1.9,1.1) {};
  \node[font=\tiny, below=1pt, text=blue!70!black] at (e3) {\texttt{e3}};
  \node[s1] (e4) at (2.8,0.5) {};
  \node[font=\tiny, below=1pt, text=blue!70!black] at (e4) {\texttt{e4}};
  \node[s1] (e5) at (3.2,1.5) {};
  \node[font=\tiny, right=1pt, text=blue!70!black] at (e5) {\texttt{e5}};
  \node[s1] (e6) at (3.8,1.2) {};
  \node[font=\tiny, right=1pt, text=blue!70!black] at (e6) {\texttt{e6}};
\end{tikzpicture}%
}
\end{minipage}%
\hspace{4pt}%
\begin{minipage}[t]{0.48\textwidth}
\vspace{0pt}
{\scriptsize\ttfamily
\begin{tabular}{@{}l@{}}
\{\\
\ \ \textcolor{black}{"D1"}:\{"ref":["\textcolor{black}{D1}"],"record":"definition ..."\},\\
\ \ \textcolor{black}{"D2"}:\{"ref":["\textcolor{black}{D2}"],"record":"definition ..."\},\\
\ \ \textcolor{black}{"L1"}:\{"ref":["\textcolor{black}{L1}"],"record":"lemma ..."\},\\
\ \ \textcolor{black}{"T1"}:\{"ref":["\textcolor{black}{T1}"],"record":"theorem ..."\},\\
\ \ \textcolor{blue!70!black}{"e1"}:\{"ref":["\textcolor{black}{D2}","\textcolor{black}{D1}"],\\
\ \ \ \ \ \ "record":"D2 uses D1"\},\\
\ \ \textcolor{blue!70!black}{"e3"}:\{"ref":["\textcolor{black}{D2}","\textcolor{black}{L1}"],\\
\ \ \ \ \ \ "record":"L1 depends on D2"\},\\
\ \ \textcolor{blue!70!black}{"e4"}:\{"ref":["\textcolor{black}{L1}","\textcolor{black}{T1}"],\\
\ \ \ \ \ \ "record":"T1 cites L1"\},\\
\ \ \textcolor{blue!70!black}{"e5"}:\{"ref":["\textcolor{black}{D2}","\textcolor{black}{T1}"],\\
\ \ \ \ \ \ "record":"T1 unfolds D2"\},\\
\ \ \textcolor{blue!70!black}{"e6"}:\{"ref":["\textcolor{black}{T1}","\textcolor{black}{L1}"],\\
\ \ \ \ \ \ "record":"T1 applies L1"\}\\
\}
\end{tabular}
}
\end{minipage}
\caption{Entry view: \textcolor{black}{black} = atoms (D1, D2, L1, T1), \textcolor{blue!70!black}{blue} = width-1 entries (e1--e5) carrying open-ended semantic records.}
\label{fig:entry-view}
\end{figure}

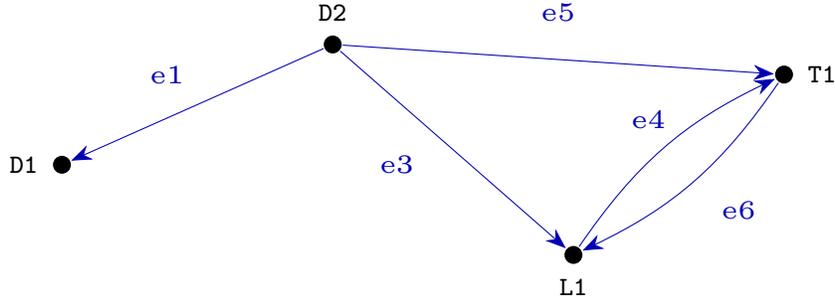
\begin{figure}[ht]
\centering
\scalebox{2.0}{%
\begin{tikzpicture}[v/.style={circle, fill, inner sep=0.5pt}]
  \tikzset{s0/.style={circle, fill=black, inner sep=1.2pt}}
  \node[s0] (D1) at (0.0,1.4) {};
  \node[font=\tiny, left=1pt] at (D1) {\texttt{D1}};
  \node[s0] (D2) at (1.8,2.2) {};
  \node[font=\tiny, above=1pt] at (D2) {\texttt{D2}};
  \node[s0] (L1) at (3.4,0.8) {};
  \node[font=\tiny, below=1pt] at (L1) {\texttt{L1}};
  \node[s0] (T1) at (4.8,2.0) {};
  \node[font=\tiny, right=1pt] at (T1) {\texttt{T1}};
  \draw[->, >=Stealth, very thin, blue!70!black] (D2) -- (D1);
  \node[font=\tiny, text=blue!70!black] at (0.7,2.0) {e1};
  \draw[->, >=Stealth, very thin, blue!70!black] (D2) -- (L1);
  \node[font=\tiny, text=blue!70!black, left=1pt] at (2.5,1.4) {e3};
  \draw[->, >=Stealth, very thin, blue!70!black] (D2) -- (T1);
  \node[font=\tiny, text=blue!70!black, above=1pt] at (3.3,2.2) {e5};
  \draw[->, >=Stealth, very thin, blue!70!black, bend left=15] (L1) to (T1);
  \node[font=\tiny, text=blue!70!black] at (3.9,1.7) {e4};
  \draw[->, >=Stealth, very thin, blue!70!black, bend left=15] (T1) to (L1);
  \node[font=\tiny, text=blue!70!black] at (4.5,1.1) {e6};
\end{tikzpicture}%
}
\caption{Network view of the same knowledge base. Atoms become nodes; width-1 entries become directed edges ($\texttt{ref[0]} \to \texttt{ref[1]}$). Edge labels (e1--e5) correspond to the entries in Figure~\ref{fig:entry-view}.}
\label{fig:network-view}
\end{figure}

\subsection{Record Conventions}

The LeanNets plugin interprets the \texttt{record} field as structured JSON with domain-specific fields. We illustrate with two example conventions, corresponding to informal and formal mathematics; the specific fields may evolve as the framework matures.

\paragraph{Informal mathematics: \texttt{source: "tex"}.}

For informal mathematical content, the \texttt{sort} field is derived from the \LaTeX\ environment (e.g., \verb|\begin{theorem}| $\to$ \texttt{sort: "theorem"}). The record carries the mathematical statement in natural language. The following illustrates the format used in the current prototype:

\medskip
{\scriptsize\ttfamily
\begin{tabular}{@{}l@{}}
\{"sort":"definition", "source":"tex", "title":"Compact Space",\\
\ \ "notes":"A subset $K$ is compact if every open cover has a finite subcover."\}\\[4pt]
\{"sort":"theorem", "source":"tex", "title":"Heine-Borel",\\
\ \ "notes":"A subset of $\mathbb{R}^n$ is compact iff it is closed and bounded."\}
\end{tabular}
}
\medskip

Atoms are definitions, theorems, lemmas, propositions, corollaries, and examples. Higher-width entries represent relationships between them: a proof references the theorem it proves and the lemmas it uses. The \texttt{notes} field supports \LaTeX\ math and inline references via \verb|\entryref{hash}{text}|.

\paragraph{Formal mathematics: \texttt{source: "lean"}.}

For formal mathematics, the \texttt{sort} field mirrors the Lean keyword (\texttt{def} $\to$ \texttt{definition}, \texttt{theorem} $\to$ \texttt{theorem}). The \texttt{content} field stores the source code, and \texttt{state} tracks formalization progress. An example from the current prototype:

\medskip
{\scriptsize\ttfamily
\begin{tabular}{@{}l@{}}
\{"sort":"theorem", "source":"lean", "title":"List.length\_append",\\
\ \ "state":"proven", "content":"theorem length\_append (l1 l2 : List a) : ..."\}\\[4pt]
\{"sort":"definition", "source":"lean", "title":"List.length",\\
\ \ "state":"checked", "content":"def length : List a -> Nat | [] => 0 | ..."\}
\end{tabular}
}
\medskip

Formal dependencies can be generated automatically by a source plugin that parses Lean's compilation artifacts. Building on the dependency structure provided by \texttt{leanblueprint}~\cite{Massot2020} and declaration graphs~\cite{LiPengSeverini2026}, Astrolabe edges carry open-ended content: a dependency can record ``rewrites by,'' ``unfolds definition of,'' or ``applies to subterm.''

\paragraph{Statement--proof separation.}
In both conventions, a theorem's \textbf{statement} and its \textbf{proof} are stored as separate atoms, connected by a width-1 edge. In Lean, a single \texttt{theorem} declaration contains both the type signature (statement) and the tactic body (proof); we split them into two entries during parsing. A statement and its proof have different lifecycles: the statement may be stable while the proof is still being revised. Separating them decouples their identities---changing a proof does not change the statement's hash, since each entry's identity depends only on its own record. This also means a single statement can admit multiple proofs (e.g., a constructive proof and a classical proof), represented as separate atoms linked to the same statement. The separation mirrors the structure of informal mathematics, where \verb|\begin{theorem}| and \verb|\begin{proof}| are distinct \LaTeX\ environments, making cross-source correspondence between \texttt{tex} and \texttt{lean} entries straightforward: the informal statement maps to the formal statement, and the informal proof sketch maps to the formal proof.

\noindent Example (\texttt{tex}):

\medskip
{\scriptsize\ttfamily
\begin{tabular}{@{}l@{}}
\textcolor{black}{"T"}:\{"ref":["\textcolor{black}{T}"],\\
\ \ "record":"\{"sort":"theorem","source":"tex","title":"Heine-Borel",\\
\ \ \ \ "notes":"A subset of $\mathbb{R}^n$ is compact iff closed and bounded."\}"\}\\[3pt]
\textcolor{black}{"P"}:\{"ref":["\textcolor{black}{P}"],\\
\ \ "record":"\{"sort":"proof","source":"tex",\\
\ \ \ \ "notes":"By contradiction, extract a sequence ..."\}"\}\\[3pt]
\textcolor{blue!70!black}{"e"}:\{"ref":["\textcolor{black}{T}","\textcolor{black}{P}"],"record":"statement-proof link"\}
\end{tabular}
}
\medskip

\noindent Example (\texttt{lean}):

\medskip
{\scriptsize\ttfamily
\begin{tabular}{@{}l@{}}
\textcolor{black}{"T'"}:\{"ref":["\textcolor{black}{T'}"],\\
\ \ "record":"\{"sort":"theorem","source":"lean","title":"IsCompact.isClosed",\\
\ \ \ \ "state":"proven","content":"theorem IsCompact.isClosed (h : IsCompact s) : IsClosed s"\}"\}\\[3pt]
\textcolor{black}{"P'"}:\{"ref":["\textcolor{black}{P'}"],\\
\ \ "record":"\{"sort":"proof","source":"lean",\\
\ \ \ \ "content":"by intro x hx; exact h.isSeqCompact.isClosed hx"\}"\}\\[3pt]
\textcolor{blue!70!black}{"e'"}:\{"ref":["\textcolor{black}{T'}","\textcolor{black}{P'}"],"record":"statement-proof link"\}
\end{tabular}
}
\medskip

\subsection{Cross-Source Edges}

Edges (width-1 entries) connect two atoms. Their record fields fall into two categories:
\begin{itemize}
  \item \textbf{Structural fields} (\texttt{sort}, \texttt{source}) are \textbf{inherited}: they are fully determined by the corresponding fields of the referenced atoms. An edge connecting a \texttt{theorem} to a \texttt{definition} inherits the sort pair \texttt{(theorem, definition)}; an edge between a \texttt{tex} atom and a \texttt{lean} atom inherits the source pair \texttt{(tex, lean)}. No manual annotation is needed for these fields. This inheritance extends naturally to higher-width entries, whose structural fields are determined by the tuple of their refs' fields.
  \item \textbf{Semantic fields} (\texttt{notes}, \texttt{title}) are \textbf{authored}: they describe the nature of the relationship (e.g., ``unfolds definition,'' ``rewrites by lemma'') and are the edge's own contribution. This is the layer of information that Astrolabe adds on top of the dependency structure that blueprint tools provide.
\end{itemize}

\noindent The sort pair is auto-derived from the pair of source sorts:

\begin{table}[H]
\centering
\begin{tabular}{ll}
\toprule
Edge sort pair & Meaning \\
\midrule
\texttt{(theorem, proof)} & Statement--proof link \\
\texttt{(theorem, definition)} & Depends on definition \\
\texttt{(proof, lemma)} & Proof cites lemma \\
\texttt{(theorem, theorem)} & Cross-source correspondence \\
\bottomrule
\end{tabular}
\end{table}

When both \texttt{tex} and \texttt{lean} entries coexist in the same file, a \texttt{(theorem, theorem)} edge with one \texttt{tex} source and one \texttt{lean} source marks the formalization correspondence. This makes the bridge between informal and formal mathematics explicit and navigable in the network view.

\subsection{Semantic Propagation}
Since identity depends only on the record (\S\ref{sec:data-model}), modifying an atom does not automatically affect other entries' hashes. But semantic dependency is not captured by identity alone. Consider: Definition~$D$ changes; Edge~$E$ (``Theorem~$T$ depends on~$D$'') still references~$D$, but Theorem~$T$ itself is unaffected at the hash level, since its ref is $[\text{self}]$ and its record may not mention~$D$. To capture this, the LeanNets plugin adds a propagation layer on the skeleton graph: when an atom changes, the directed edge structure is traversed in reverse to identify all atoms that \textbf{semantically} depend on the changed atom. If Definition~$D$ is modified, every atom reachable by following edges backward from~$D$ is flagged as affected. This is a straightforward reverse BFS on the skeleton graph. Figure~\ref{fig:semantic-propagation} illustrates the difference.

\begin{figure}[ht]
\centering
\begin{tikzpicture}[
  atom/.style={circle, draw, minimum size=18pt, inner sep=0pt, font=\scriptsize},
  edge/.style={rectangle, draw, rounded corners=2pt, minimum size=14pt, inner sep=2pt, font=\scriptsize},
  changed/.style={fill=red!25, draw=red!70!black},
  rehashed/.style={fill=orange!20, draw=orange!70!black},
  affected/.style={fill=red!15, draw=red!60!black, dashed},
  unaffected/.style={fill=gray!10, draw=gray!50},
  arr/.style={->, >=Stealth, thin},
]
\node[font=\small\bfseries] at (1.5, 2.8) {Identity only};
\node[atom, changed] (D1) at (0, 1.5) {$D$};
\node[atom, unaffected] (T1) at (3, 1.5) {$T$};
\node[edge, unaffected] (E1) at (1.5, 0.3) {$E$};
\draw[arr] (E1) -- (D1);
\draw[arr] (E1) -- (T1);
\node[font=\tiny, red!70!black] at (0, 0.8) {changed};
\node[font=\tiny, gray!70] at (1.5, -0.2) {unaffected};
\node[font=\tiny, gray!70] at (3, 0.8) {unaffected};

\node[font=\small\bfseries] at (7.5, 2.8) {Semantic propagation};
\node[atom, changed] (D2) at (6, 1.5) {$D$};
\node[atom, affected] (T2) at (9, 1.5) {$T$};
\draw[arr, thick, red!50!black] (D2) -- node[above, font=\tiny] {depends on} (T2);
\node[font=\tiny, red!70!black] at (6, 0.8) {changed};
\node[font=\tiny, red!60!black] at (9, 0.8) {flagged};
\end{tikzpicture}
\caption{Left: since identity depends only on the record, modifying~$D$ does not affect Edge~$E$ or Theorem~$T$ at the hash level. Right: semantic propagation on the skeleton graph traverses the dependency edge in reverse, flagging~$T$ as semantically affected by the change in~$D$.}
\label{fig:semantic-propagation}
\end{figure}
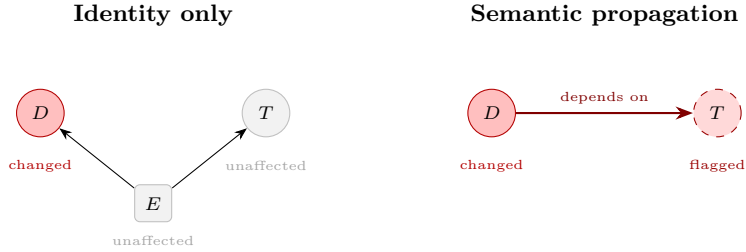

\subsection{Visualization and Interaction}
\label{sec:visualization}

The network view renders the extracted graph on a 2D canvas using a force-directed layout. Node \textbf{size} encodes a selectable graph metric (degree, PageRank, betweenness centrality, Katz centrality, HITS hub/authority scores, DAG depth, or reachability count). Node \textbf{color} encodes either a categorical attribute (sort, community via greedy modularity, spectral clustering) or a continuous metric (PageRank, betweenness, DAG depth). Nodes can be grouped by \textbf{clustering} (Louvain, sort, depth, or spectral), with a tightness parameter controlling the intra-cluster attractive force. All metrics are computed independently per source to prevent cross-source edges from dominating centrality scores.

%% file: sections/conclusion.tex
\section{Future Work}
\label{sec:conclusion}

We outline three directions for future work.

\begin{itemize}
  \item \textbf{Mathematical foundations.} Barkeshli, Douglas, and Freedman~\cite{BarkeshliDouglasFreedman2026} define a \emph{universal proof hypergraph} whose directed $(p,q)$-hyperedges represent derivation rules with $p$ input propositions and $q$ output propositions. Astrolabe's nerves are content-oriented: the reference list is ordered but does not distinguish inputs from outputs. Characterizing the precise embedding and reduction between the two representations is a natural theoretical question.

  \item \textbf{Content-addressing formal mathematics.} Applying content-addressing to type expressions in the Calculus of Inductive Constructions raises questions about normalization strategy (structural vs.\ reducible-normalization), the choice between Merkle DAG and name-based hashing, and the theoretical limitations imposed by the absence of unique normal forms in dependent type theory. These questions are explored in~\cite{AstroCA}.

  \item \textbf{Network analysis for autoformalization.} Whether network-analytic signals (centrality, community structure, DAG depth) extracted from the Astrolabe store can improve premise retrieval and proof strategy selection for autoformalization agents is an empirical question. Preliminary evidence from the Mathlib dependency network~\cite{LiPengSeverini2026} suggests that structural features alone carry predictive power for premise selection.
\end{itemize}